\newtheorem{theorem}{Theorem}
\begin{document}

\begin{frontmatter}

\title{A Branch-and-Price Algorithm for a Team \\Orienteering Problem with Fixed-Wing Drones}

\author{Kaarthik Sundar\corref{cor1}\fnref{fn1}}
\address{Los Alamos National Laboratory, New Mexico 87544, USA}
\cortext[cor1]{Corresponding author}
\fntext[fn1]{Staff Scientist, Information Systems and Modeling, Los Alamos National Laboratory.}
\ead{kaarthik@lanl.gov}

\author{Sujeevraja Sanjeevi\corref{cor2}}
\ead{sujeev.sanjeevi@gmail.com}

\author{Christopher Montez\fnref{fn3}}
\address{Texas A\&M University, College Station, Texas 77843, USA}
\fntext[fn3]{Graduate Student, Department of Mechanical Engineering, Texas A\&M University.}
\ead{yduaskme@tamu.edu}

\begin{abstract}
This paper formulates a team orienteering problem with multiple fixed-wing drones and develops a branch-and-price algorithm to solve the problem to optimality. Fixed-wing drones, unlike rotary drones, have kinematic constraints associated with them, thereby preventing them to make on-the-spot turns and restricting them to a minimum turn radius. This paper presents the implications of these constraints on the drone routing problem formulation and proposes a systematic technique to address them in the context of the team orienteering problem. Furthermore, a novel branch-and-price algorithm with branching techniques specific to the constraints imposed due to fixed-wing drones are proposed. Extensive computational experiments on benchmark instances corroborating the effectiveness of the algorithms are also presented.
\end{abstract}

\begin{keyword}
routing \sep team orienteering \sep branch-and-price \sep fixed-wing drones \sep decremental state space relaxations \sep Dubins vehicles
\end{keyword}

\end{frontmatter}


\section{Introduction} \label{sec:introduction}
Over the past decade, vehicle routing problems (VRPs) involving drones for delivery \cite{VRPDroneDelivery2016,SidekickTSP2015}, healthcare \cite{DroneHealthcare2017}, monitoring, sensing, mapping, and surveillance have garnered tremendous attention from academia and the industry (see \cite{DroneVRPSurvey2018} for a comprehensive survey of drone VRPs). The focus of this article is to systematically address the kinematic constraints enforced by the drones, which in turn affect the path taken by the drone to go from one point to another, in the context of a team orienteering problem. All the drones currently available in the market can broadly be classified into rotary drones (see Fig. \ref{fig:rotary}) and fixed-wing drones (see Fig. \ref{fig:fixed-wing}). The former is predominantly used in applications pertaining to delivery and healthcare, whereas the latter's use is more common in monitoring, sensing, mapping, and surveillance \cite{DubinsTSP2008,RathinamDubinsTSP2007,GuptaLowerBounds2,GuptaLowerBounds1,FuelTSP2013,MHTSP2017}. The reason for choosing the team orienteering problem to formulate kinematic constraints for drones is that many variants of the classical team orienteering problem with multiple fixed-wing drones and limits on flight times have been directly used in the context of surveillance, sensing, and data collection missions \cite{Pvenivcka2017,Pvenivcka2017a,Tsiogkas2018}. One characteristic of fixed-wing drones that sets it apart from rotary drones with implications to VRPs is the inability to make on-the-spot turns due to kinematic restrictions. This feature of the fixed-wing drones invalidates the assumption that the minimum distance from one point to another is equal to the Euclidean distance between the two points. This is unlike rotary drones where this assumption is closer to reality (see Fig. \ref{fig:paths} for the paths taken by a fixed-wing drone and rotary drone).

\begin{figure}[h]
    \centering
    \includegraphics[width=.5\linewidth]{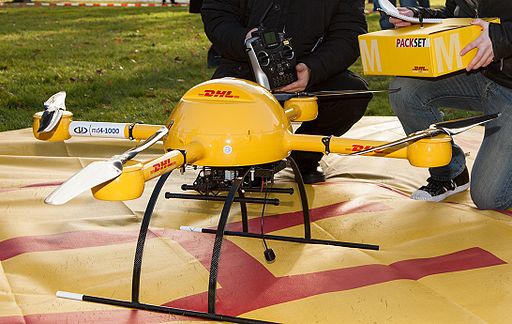}
    \caption{DHL's drone Paketcopter used in package delivery. Source: \url{https://commons.wikimedia.org/wiki/File:Package_copter_microdrones_dhl.jpg}}
    \label{fig:rotary}
\end{figure}

\begin{figure}[ht]
    \centering
    \includegraphics[width=.5\linewidth]{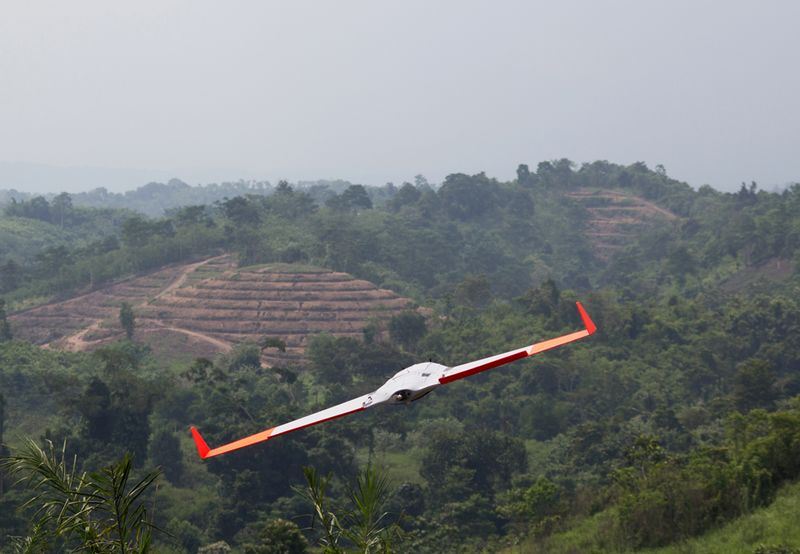}
    \caption{AeroTerrascan's drone Ai450 mapping a field in Indonesia. Source: \url{https://kids.kiddle.co/Image:Agriculture_UAV.jpg}}
    \label{fig:fixed-wing}
\end{figure}

\begin{figure}[ht]
    \centering
    \begin{subfigure}[t]{0.4\textwidth}
        \centering
        \includegraphics[height=1.2in]{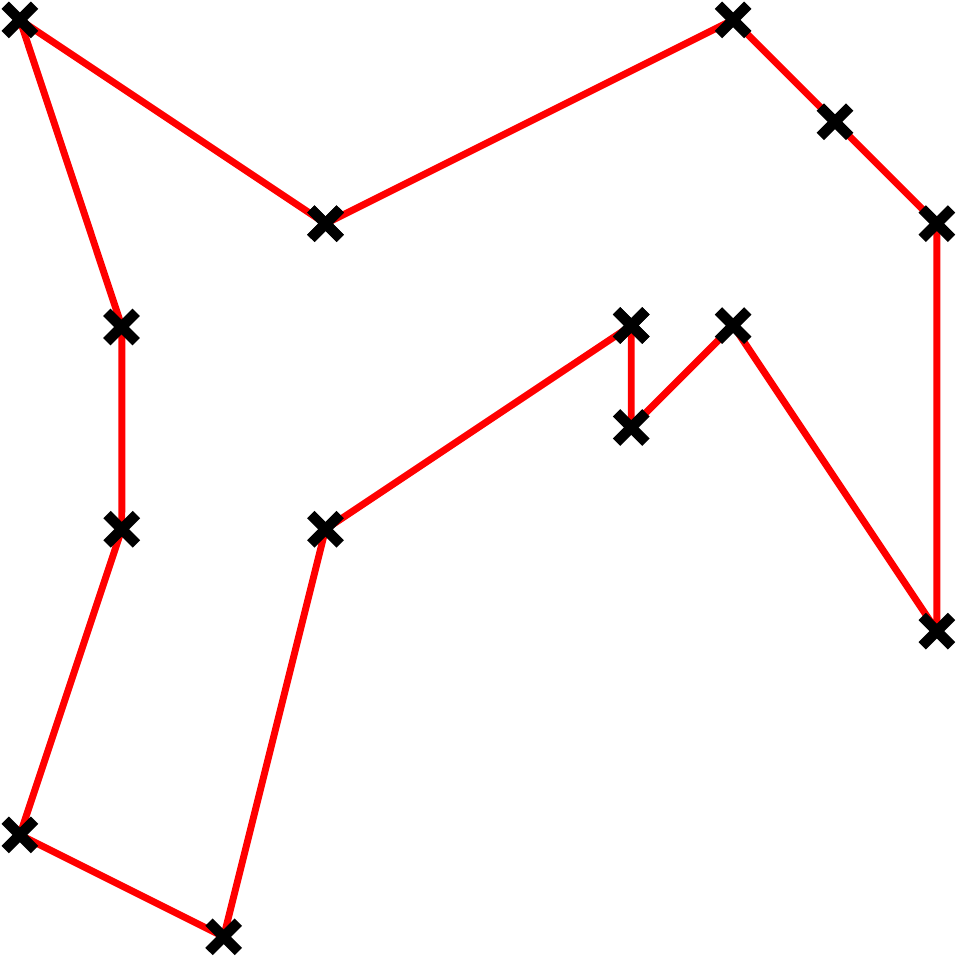}
        \caption{Path taken by a rotary drone. The shortest distance between any two targets for a rotary drone is the Euclidean distance between the targets.}
    \end{subfigure}%
    ~~~~~~
    \begin{subfigure}[t]{0.4\textwidth}
        \centering
        \includegraphics[height=1.2in]{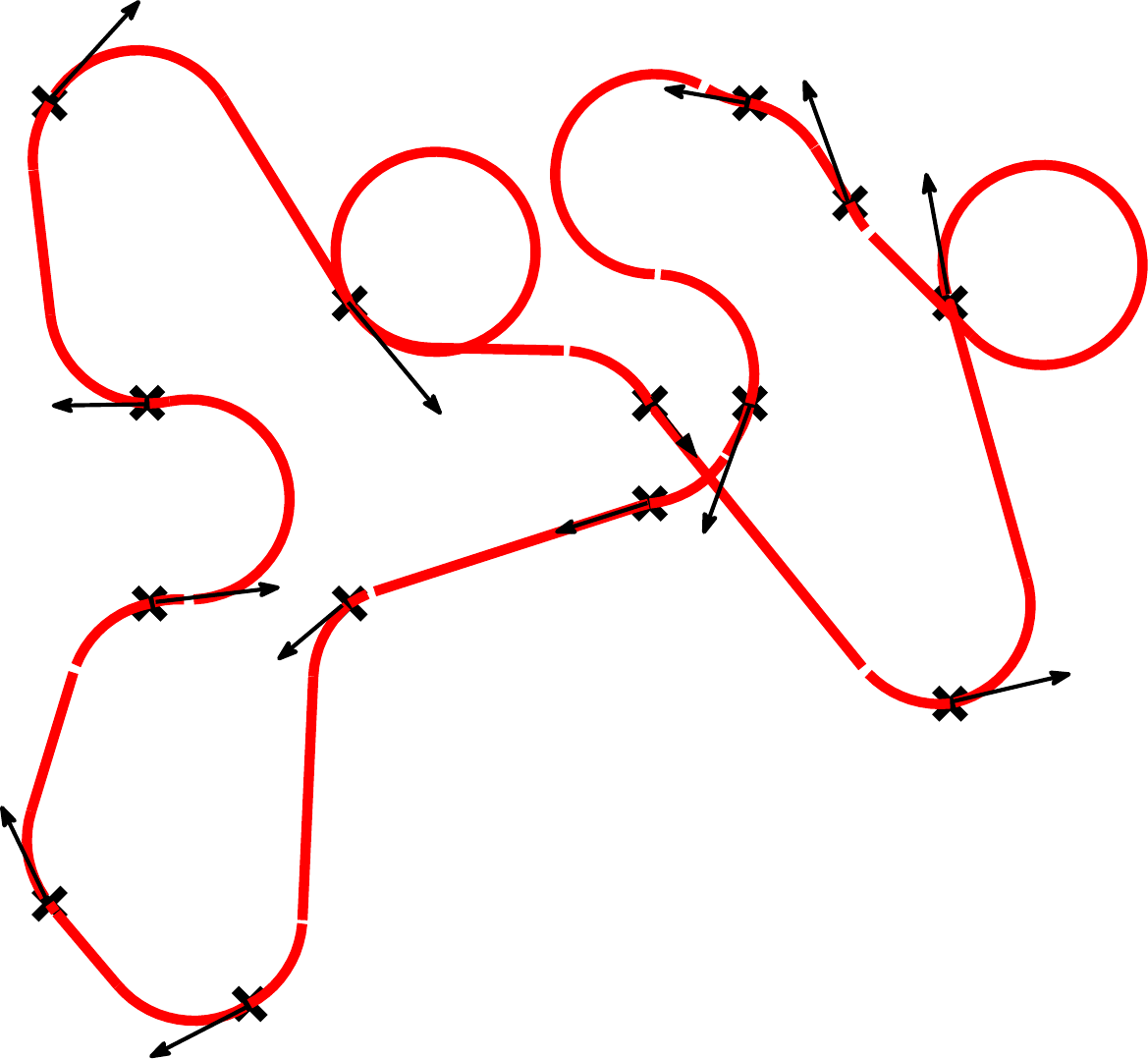}
        \caption{Path taken by a fixed-wing drone. The shortest distance between any two targets for a fixed-wing drone is governed by the drone's kinematic restrictions and its minimum turn radius.}
    \end{subfigure}
    \caption{A comparison of the optimal paths taken by a rotary and fixed-wing drone through a set of targets.}
    \label{fig:paths}
\end{figure}

If we assume that a fixed-wing drone travels at a constant speed $v$, the minimum length path for the drone to travel from a point with Euclidean coordinates $(x_i, y_i)$ to another point $(x_j, y_j)$ would depend on the angle of departure at point $i$ (say $\theta_i$) and angle of arrival at point $j$ (say $\theta_j$). Though the results presented in this article can be extended in a straightforward way to account for points in 3-D, we do not do so for ease of exposition. The kinematic constraints of such a fixed-wing drone is then given by $\dot{x} = v \cos \theta$, $\dot{y}= v \sin \theta$, and $|\dot{\theta}| \leqslant \alpha$, where $\dot x$, $\dot y$, and $\dot \theta$, are the $x$-component of the velocity, the $y$-component of the velocity, and the angular velocity respectively. The value $\alpha$ is referred to as the maximum yaw rate of the drone. Given the kinematic constraints, for any two points $i$ and $j$ with angle of departure $\theta_i$ and arrival $\theta_j$, the shortest path for the drone to start at $i$ and reach $j$ can be computed a priori using the well-known result by Dubins \cite{Dubins}. Given this result, any VRP involving fixed-wing drones requires inclusion of heading angle of vehicles at any given point to be a part of the decision making process to accurately model vehicle paths. This is often ignored while extending most classical VRP algorithms to problems involving drones \cite{DroneVRPSurvey2018}. Recent papers \cite{RathinamDubinsTSP2007,GuptaLowerBounds2,GuptaLowerBounds1,DOP2017} have attempted to address this issue by making heading angles at targets as decision variables and developing heuristic algorithms to solve the resulting problems. However, a comprehensive look at examining its implications in an exact approach like branch-and-price is lacking. This article aims to fill this gap in the literature by formulating a team orienteering problem for homogeneous fixed-wing drones and developing a branch-and-price (B\&P) algorithm to solve the problem to optimality. 

To the best of our knowledge, this is the first attempt at developing an exact branch-and-price algorithm for a team orienteering problem with fixed-wing drones including their kinematic constraints. We expect the algorithms developed in this article to enable modification of algorithms developed for VRP variants without drones to those with drones.

\section{Related work} \label{sec:related-work}
Literature for drone routing is vast and we refer an interested reader to \cite{DroneVRPSurvey2018} for an extensive survey on VRP variants for drones. In this section, we analyze literature specific to drone routing problems taking into account kinematic constraints of drones and the orienteering problem. For ease of exposition, we will classify literature using the following two categories: (i) the orienteering problem, its variants, and algorithms and (ii) drone VRPs that enforce kinematic constraints on the drones. 

The orienteering problem (OP) was first introduced in \cite{Chao1996b} in the context of the sport of orienteering where competitors start at a control point, try to visit as many checkpoints as possible and return to the control point within a given time frame. Each checkpoint has a certain score and the objective is to maximize the total collected score. Ever since the problem's inception, many variants of the OP have been used in mapping applications \cite{DOP2017}, tourism, logistics etc. (see \cite{Vansteenwegen2011,Gunawan2016} and references therein). Both exact \cite{Fischetti1998} and heuristic approaches \cite{Vansteenwegen2009,Sevkli2006,Souffriau2010,Ramesh1991,Golden1988} to solve OP have received extensive attention in the literature. The focus of this article is on the team orienteering problem (TOP) \cite{Chao1996a}, a multi-vehicle generalization of OP that corresponds to playing the game of orienteering by teams of several persons with each collecting scores during the same time span. Practical applications of the TOP range from athlete recruitment \cite{Butt1994} to technician routing \cite{Tang2005}. The main reason for choosing the TOP for this article is its applicability in a mapping or surveying application for multiple fixed-wing drones. Though many other variants of the OP could also have been considered, we chose the TOP for its simplicity and to demonstrate computational issues encountered when including only kinematic constraints of fixed-wing drones without adding other complicating mission restrictions (like time-windows \cite{Vansteenwegen2009-tw}).  Another variant of the OP that is very closely related to the problem considered in this paper is the Set Orienteering Problem (SOP) \cite{Archetti2018} which aims to solve an OP with a pre-specified number of sets of targets and the goal is to visit at most one target from each set; to the best of our knowledge, no TOP variants of the SOP have been addressed in the literature. Many exact approaches have been developed for the TOP including branch-and-cut \cite{El2016,Bianchessi2018}, branch-and-price \cite{Boussier2007,Keshtkaran2016}, and branch-cut-and-price algorithms \cite{Poggi2010}. Among all these approaches, the approaches in \cite{Bianchessi2018} and \cite{Keshtkaran2016} are known to be the best and the second best algorithms, respectively, to solve the TOP to optimality. Furthermore, branch-and-price has also been used to solve certain variants of the VRP with drones (without kinematic constraints) successfully to optimality \cite{Wang2019}. Hence, in this article, we develop a branch-and-price algorithm to solve the TOP for fixed-wing drones to optimality with a focus on algorithmic aspects specific to these drones. In this context, we also note that the classical TOP is a relaxation of the TOP for fixed-wing drones in the sense that it ignores kinematic constraints. Hence, any exact approach developed for the TOP would directly provide an upper bound to the optimal objective of the TOP with fixed-wing drones. Next, we discuss literature in the context of drone VRPs which have addressed some version of the kinematic constraints and developed heuristic algorithms to solve them.

The first work in the literature to stress the importance of kinematic constraints in the context of path planning for fixed-wing drones is \cite{Tang2005a}. Though no explicit VRPs was formulated in that \cite{Tang2005a}, it was the starting point for many papers \cite{DubinsTSP2008,RathinamDubinsTSP2007,GuptaLowerBounds2} that formulated the traveling salesman problem (TSP) with a fixed-wing drone. This problem is also referred to as the ``Dubins TSP'' in the literature, since it was L. E. Dubins in his seminal paper in 1957 \cite{Dubins} who solved the shortest path problem for a fixed-wing drone to go from source to a destination with specified angles of departure and arrival while satisfying the kinematic constraints of the drone.  The focus of all these articles \cite{DubinsTSP2008,RathinamDubinsTSP2007,GuptaLowerBounds2} was to develop a technique to include the kinematic constraints of the fixed-wing drones to the TSP rather than solve the resulting problems themselves to optimality. The approach that was taken by all the papers was to eventually transform the TSP with fixed-wing drones to an asymmetric TSP, albeit a huge one, and solve it either using the Concorde (\url{http://www.math.uwaterloo.ca/tsp/concorde.html}) TSP solver or heuristics. This severely restricted the problem sizes that can be solved to optimality. A similar approach using the Variable Neighborhood Search (VNS) algorithm was developed for the Dubins OP with a single fixed-wing drone \cite{DOP2017}. In \cite{DOP2017}, the focus was again to formulate the OP to include kinematic constraints and use a VNS to solve the problem heuristically. Finally, all approaches and problems considered thus far in this section that includes kinematic constraints only deal with single vehicle variants. To the best of our knowledge, there is no work in the literature that attempts to develop a comprehensive exact approach to solve multi-drone VRPs with kinematic constraints to optimality. This article is the first work that takes a step in that direction using a TOP approach. 

\subsection{Contributions} \label{subsec:contributions}
In summary, the following are the main contributions of the article: (i) we formulate a team orienteering problem for homogeneous fixed-wing drones and develop the first \textit{concurrent} multi-threaded B\&P algorithm to solve it to optimality (ii) we develop a `Decremental State Space Relaxation' (DSSR) to solve the pricing problem while utilizing the structure of the problem; this approach is based on the state-of-the-art DSSR algorithm \cite{Righini2009} that is used in the literature to solve pricing problems occurring in generic VRPs, (iii)  problem-specific branching strategies to further speed up the pricing problem solution approach are presented, and finally (iv) extensive computational experiments that corroborate the effectiveness of the concurrent B\&P to solve the problem and show the efficacy of utilizing the DSSR are detailed.

The rest of the article is organized as follows: in Sec. \ref{sec:statement} we present the formal problem statement after introducing suitable notations, in Sec. \ref{sec:formulation}, we present the mathematical formulation for the problem, followed by the B\&P algorithm in Sec. \ref{sec:bp} and computational results in Sec. \ref{sec:results}. Finally, Sec. \ref{sec:conclusions} details the potential avenues for future work and concludes the article. 

\section{Problem statement} \label{sec:statement}
Throughout the rest of the article, we shall refer to the TOP with fixed-wing drones as the ``Dubins Team Orienteering Problem'' (DTOP). DTOP is a generalization of the TOP with multiple homogeneous fixed-wing drones whose paths have to satisfy kinematic constraints. In practice, this involves accounting for heading angles at targets since the shortest path between any pair of targets for any vehicle depends on the heading angle of the vehicle at both targets and its maximum yaw-rate. If the heading angles at each target for any vehicle is specified a priori, then the DTOP reduces to the asymmetric TOP. The asymmetry arises from the fact that the shortest path length may change when the vehicle is traveling from target $t$ to $u$ as opposed to $u$ to $t$ even for fixed-heading angles at $t$ and $u$. Suppose that the sequence of target visits for each path is specified. Then, computing a path satisfying kinematic constraints through this sequence involves computing the heading angles at each target. This itself is an NP-hard optimal control problem with intermediate state constraints \cite{Kaya2017}. In this article, we present an approach that decouples the combinatorial and optimal control problems and reduces the DTOP to a pure combinatorial problem by discretizing heading angles at each target. This approach is not new and has been previously proposed in the literature \cite{GuptaLowerBounds1} in the context of developing heuristics and transformation algorithms. Once decoupled, we obtain a generalization of the asymmetric TOP with a set of vertices for each target with the vehicles having to visit at most one vertex from each target set. We develop an exact B\&P algorithm for this discretized version of the DTOP. We note that this discretization scheme is very general and can be used for any VRP with fixed-wing drones. Throughout the rest of the article, we refer to the discretized version of the DTOP as the D-DTOP. In this context, we remark that the focus of the article is to solve the D-DTOP to optimality for a fixed number of discretizations rather than solving the DTOP. 

We first introduce some notations to formally state the D-DTOP. We are given $m$ identical fixed-wing drones or Dubins vehicles with a maximum yaw-rate of $\alpha$. All vehicles are assumed to travel at constant speed. Let the set of targets be denoted by $T = \{1, \dots, n\} \cup \{s, d\}$ ($s$ is the source target where $m$ vehicles are initially stationed and $d$ is the destination where the $m$ vehicle paths have to terminate). Associated with each target $t \in T$ is a non-negative score $p_t$ that is collected when any vehicle visits $t$. The targets $s$ and $d$ are assigned zero scores. Any vehicle can arrive at and depart from any target at a heading angle chosen from the set $\Theta = \{\theta_1, \theta_2, \dots, \theta_k\}$. Hence, each target $t \in T$ is associated with a set of $k$ vertices denoted by the set $V^t$. When any vehicle visits a vertex $v \in V^t$, this in turn translates to the vehicle arriving at and departing from the target $t$ at a heading angle that corresponds to the vertex $v$. For any vertex $v$, we let $\beta_v$ denote the heading angle corresponding to the vertex $v$. A path from vertex $p \in V^t$ to $q \in V^u$ for distinct targets $t,u \in T$ is assigned a length $c_{pq}$, that is given by the shortest Dubins path from target $t$ to target $u$ with angle of departure and arrival set to $\beta_p$ and $\beta_q$. With these notations, the D-DTOP is formulated on a directed graph $G = (V, E)$, where $V = \bigcup_{t \in T} V^t$ is the union of the vertex sets for all targets. The edge set $E$ consists of all the edges between any pair of vertices $i, j \in V$ that connect distinct targets. The objective of the D-DTOP is to compute $m$ paths, one for each vehicle, that start at some vertex in the source target, visits a subset of vertices such that at most one vertex is visited from each set $V^t$, $t \in T$ and ends in some vertex in the destination target, while keeping the length of each path less than a pre-specified limit $L_{\max}$. Similar to the TOP, the D-DTOP aims to maximize the sum of collected scores. A feasible solution to an instance of the D-DTOP is shown in Fig. \ref{fig:illustration}. In the next section, we present a set-packing formulation for the D-DTOP inspired by previous work on the TOP \cite{Boussier2007}. 

\begin{figure}
    \centering
    \includegraphics[scale=0.7]{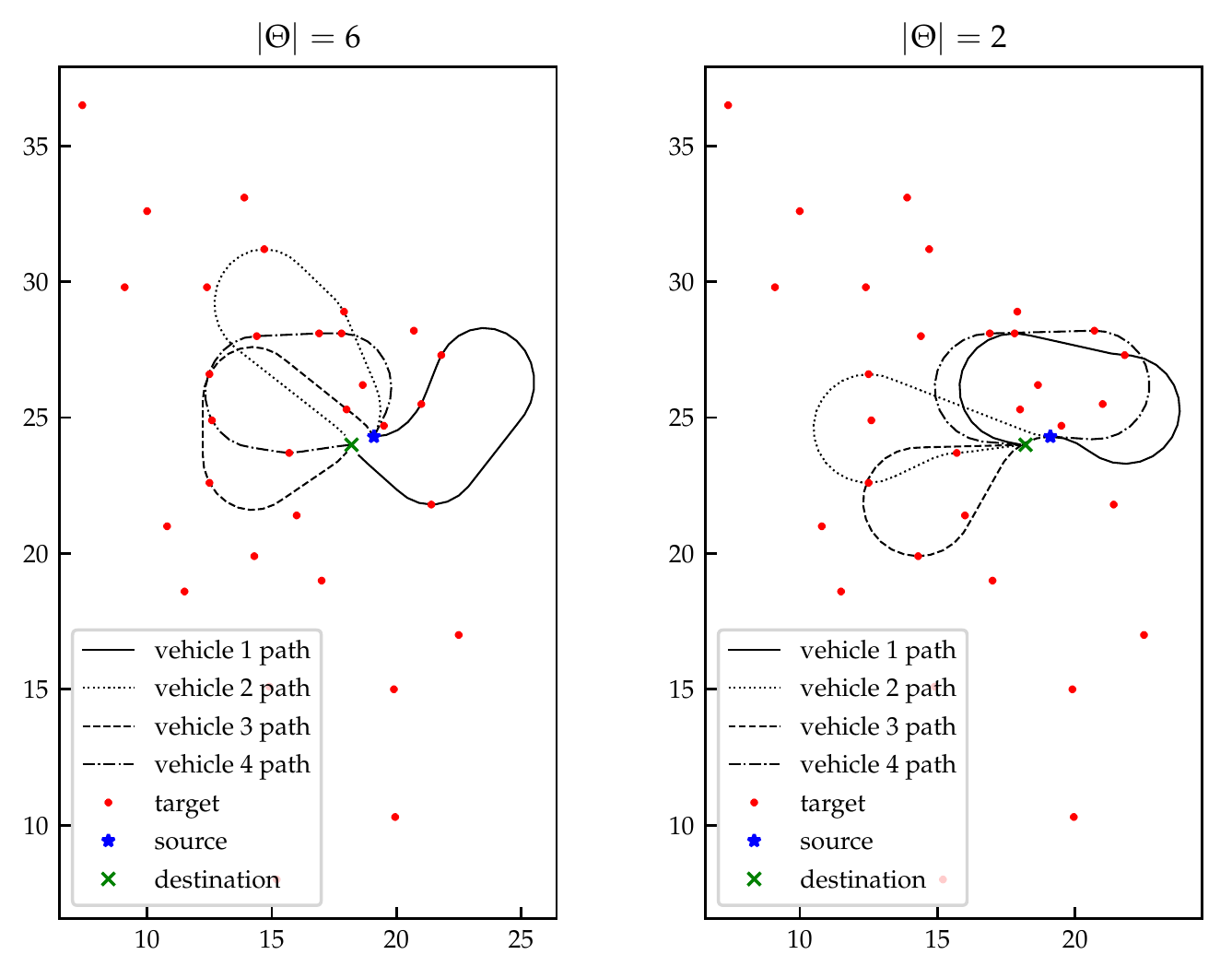}
    \caption{Feasible solutions for an instance of the D-DTOP for 6 and 2 discretizations.}
    \label{fig:illustration}
\end{figure}

\section{Mathematical formulation} \label{sec:formulation}
Let $R = \{r_1, r_2, \dots, r_{|R|}\}$ denote the set of possible routes where each route starts at some vertex in $V^s$, visits a subset of vertices such that at most one vertex is visited from each target, and ends at some vertex in $V^d$ with total path length at most $L_{\max}$. Let $z_r$ be a binary decision variable that takes a value of $1$ if route $r \in R$ is used and $0$ otherwise. Let $p_r$ denote the route score, i.e. the sum of scores of targets on the route. We identify targets visited by route $r$ using a binary parameter $a_{tr}$ that has value $1$ for each target $t$ visited by the route and $0$ for other targets. Then the D-DTOP can be formulated as follows:
\begin{flalign}
(\mathcal F) \qquad & \max \quad \sum_{r \in R} p_r z_r & \label{eq:obj} \\ 
\text{subject to:} \qquad & \sum_{r \in R} z_r \leqslant m, & \label{eq:route-cover} \\ 
\qquad & \sum_{r \in R} a_{tr} z_r \leqslant 1, \qquad \forall\  t \in T\setminus\{s, d\}, & \label{eq:target-cover} \\ 
\qquad & z_r \in \{0, 1\} \qquad \forall\ r \in R. \label{eq:binary} 
\end{flalign}
Constraint \eqref{eq:route-cover} limits the number of routes to $m$. Constraints \eqref{eq:target-cover} ensure that at most one visit is made to each target. We first present an approach to solve the continuous relaxation of Eq. \eqref{eq:obj} -- \eqref{eq:binary} using column generation, a natural fit here due to the exponential size of $R$. We then embed this approach into a branch-and-bound framework to find an optimal solution to the binary formulation. Throughout the rest of the article, we will refer to the linear relaxation of Eq. \eqref{eq:obj} -- \eqref{eq:binary} as the Master Problem (MP).

\section{Branch-and-Price algorithm} \label{sec:bp}
Our proposed B\&P algorithm for the exact resolution of the D-DTOP is structured similar to the B\&P algorithm for the TOP in \cite{Boussier2007}. Our approach deviates from \cite{Boussier2007} in the algorithms used for solving the pricing sub-problems, the branching scheme and other enhancements specific to the D-DTOP. In the subsequent paragraph, we present the column generation algorithm that computes an upper bound for the D-DTOP by solving MP.

\subsection{Column-generation for solving MP} \label{subsec:cg}
The algorithm starts with a Restricted MP (RMP) that contains a limited number of routes in $R$. It then iterates between solving RMP to update reduced cost values and searching for positive reduced cost routes. It terminates when no such route can be found as optimality has been reached. To define the reduced cost of a route, we let $\lambda_0 \geqslant 0$ and $\lambda_t \geqslant 0$ denote the dual variables associated with the constraints in \eqref{eq:route-cover} and \eqref{eq:target-cover} respectively. A route $r \in R$ has a positive reduced cost if
\begin{flalign}
\lambda_0 + \sum_{t \in T} a_{tr} \lambda_t < p_r \quad \text{or} \quad p_r - \sum_{t \in T} a_{tr} \lambda_t > \lambda_0. \label{eq:prc-route}
\end{flalign}
Hence, finding a route with a positive reduced cost is equivalent to solving a resource-constrained elementary shortest path problem (RCESPP). We note that the RCESPP itself is an NP-hard problem \cite{Desrosiers2005}.

\subsection{Pricing problem algorithm} \label{subsec:pricing}
Our approach to solve the pricing problem builds on the bounded bi-directional dynamic programming procedure with a \textit{Decremental State Space Relaxation} (DSSR). DSSR was originally proposed in \cite{Righini2008}, and also shown to be computationally effective for TOP and its variants \cite{Righini2009,Keshtkaran2016}. We directly use this procedure on the graph $G$ with the updated reduced costs in Sec. \ref{subsec:cg} combined with special branching rules (detailed in the subsequent sections) that are unique to the D-DTOP. A flow chart of the pricing algorithm is shown in Fig. \ref{fig:pricing} for clarity.

\begin{figure}
    \centering
    \includegraphics[scale=0.7]{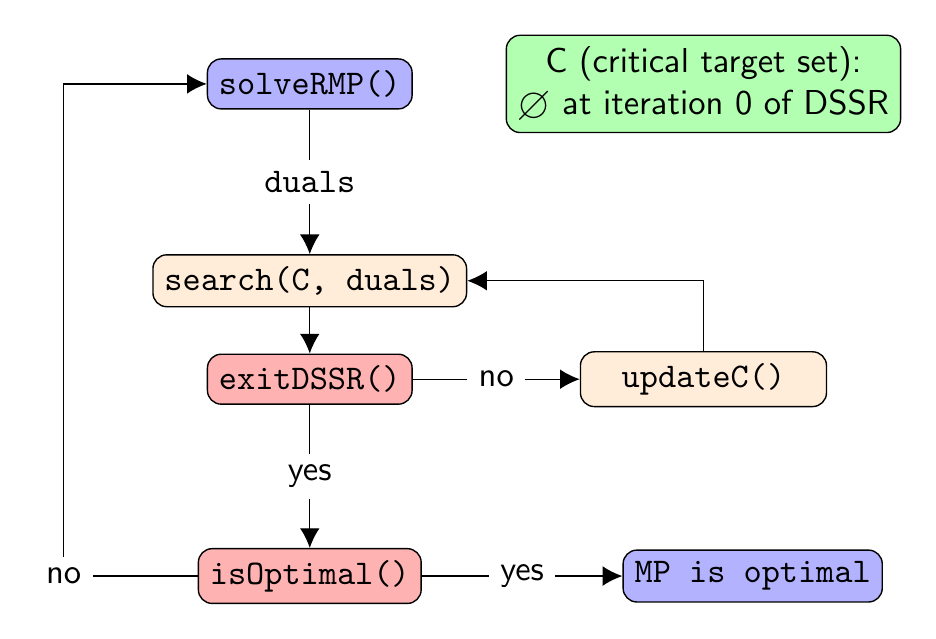}
    \caption{Flow chart of the pricing algorithm with the search algorithm implemented using a DSSR acceleration scheme.}
    \label{fig:pricing}
\end{figure}

We first present a brief overview of the labeling procedure proposed in \cite{Righini2009} that relies on DSSR. It uses bi-directional search by building and extending forward and backward labels (partial paths). Paths are generated by feasible joins of forward labels with backward labels.  A critical resource with monotonic consumption along paths is selected, and the exploration in each direction is stopped when half of such critical resource has been consumed. The search procedure of \cite{Righini2009} performs these operations sequentially; all forward extensions are generated, followed by backward extensions and then joins. The DSSR is an effective acceleration technique proposed by \cite{Righini2008}. In this method, the elementarity condition is only checked for a subset of vertices called ``critical vertices'' and multiple visits are allowed to vertices that are not critical. Whenever the solution to this relaxed problem contains vertices visited more than once, either a subset or all such vertices are added to the critical set, and the problem is solved again. One way to use this procedure for multi-vehicle problems is shown in \cite{Righini2009,Keshtkaran2016}. This version includes exit conditions and label dominance rule relaxations in early iterations of the DSSR. 

We now describe the main details of the pricing problem algorithm implemented for the D-DTOP. For the D-DTOP, though the DSSR builds paths through vertices, it relies on the notion of \textit{critical targets}, a set of targets to which multiple visits are not allowed. It is updated at the end of each search iteration with targets that are visited multiple times by the highest reduced cost path. Note that marking a target as critical prevents multiple visits to all vertices of the target. Finally, the path length is used as a bounding resource for the D-DTOP since this is the only resource. Forward and backward labels are extended until their partial path lengths do not exceed $L_{\max}/2$. The DSSR loop is started with an empty set of critical targets (see Fig. \ref{fig:pricing}). In each iteration, the label extension phase in both the directions is followed by joining the forward and backward labels using the edge connecting the two labels. If the length of the resulting path is greater than $L_{\max}$, the path is discarded. The join phase keeps track of the highest reduced cost path (possibly non-elementary) and all elementary positive reduced cost paths. When the number of elementary positive reduced cost paths exceeds $500$, the join phase is interrupted. This parameter is referred to as $\texttt{MaxPath}$ in \cite{Keshtkaran2016}. If in the current DSSR iteration, the highest reduced path is non-elementary and the join phase has produced elementary positive reduced cost paths, we add these paths to the RMP and solve the RMP to get updated dual values. If no elementary paths were found in a DSSR iteration and the highest reduced cost path was non-elementary, then all the targets visited more than once in this path are added to the set of critical targets and the bounded bidirectional search is restarted. The process is iterated until we do not find any path with positive reduced cost. In the next section, we present dominance rules that aid in speeding up the label extension phase of each DSSR iteration.

\subsection{Dominance rules} \label{subsec:dominance}
Before we present the dominance rules, we introduce some notations for ease of presentation. A label is defined by $(S, \ell, c, i)$, where $S$ is the set of critical targets visited, $\ell$ is the length of the partial path, $c$ is the reduced cost of the partial path, and $i$ is the last vertex reached. Given two labels, $L_1 = (S_1, \ell_1, c_1, i)$ and $L_2= (S_2, \ell_2, c_2, i)$, we say $L_1$ dominates $L_2$ if the following conditions are satisfied:
\begin{flalign}
\text{(i) } c_1 \geqslant c_2 \text{ (ii) } \ell_1 \leqslant \ell_2, \text{ and (iii) } S_1 \subseteq S_2 \label{eq:dominance}
\end{flalign}
with at least one of the inequalities being strict. Given this definition of dominance, we discard (a) a newly generated label if it is dominated by an existing label and (b) an existing label if it is dominated by the newly generated label.

Furthermore, we also discard labels associated with paths containing cycles of length two on targets (referred to as two-cycle elimination). This is easily done by not extending a given label to the target of its predecessor vertex. When we eliminate labels to avoid two-cycles, care must be taken to differentiate between dominance and discarding. Specifically, if a label $L$ dominates a label $L'$, we can discard $L'$ only if one of the following three conditions are satisfied \cite{Irnich2005}:
\begin{itemize}
    \item $L$ has the same predecessor as $L'$, or
    \item $L$ cannot be extended to the predecessor target on its own partial path, or
    \item There exists a label other than $L$ that dominates $L'$ and has a different predecessor target than $L$.
\end{itemize}
In our implementation of the DSSR, we utilize all the aforementioned conditions to discard labels. In the next section, we present a branching scheme that is specific to the problem structure induced by fixed-wing drones.

\subsection{Branching scheme} \label{subsec:branching}
If an optimal solution to the MP in Eq. \eqref{eq:obj}--\eqref{eq:binary} is fractional, we use a branching scheme similar to the vertex branching scheme in \cite{Boussier2007} for the TOP. We perform the following types of branching in stated order:

\begin{enumerate}[(i)]
    \item \textit{Target branching}: Select a target $t$ with fractional flow. Create two sub-problems, one enforcing a visit to $t$ and one forbidding visits to $t$. When multiple targets are available, select one with the least value of $\lambda_t - p_t$.
    \item \textit{Target connection branching}: Select a target connection $(t_1, t_2)$ with fractional flow. If visits to $t_1$ or $t_2$ are already enforced, create two sub-problems by enforcing and prohibiting direct connections between $t_1$ and $t_2$. Otherwise, create three sub-problems. The first prohibits visits to $t_1$. The second enforces visits to $t_1$ and prohibits direct connections from $t_1$ to $t_2$. The third enforces visits to $t_1$ and enforces a direct connection between $t_1$ and $t_2$. When multiple candidate edges are available, select an edge that starts from a target with the least value of $\lambda_t - p_t$.
\end{enumerate}

We forbid targets and target connections by building a reduced graph. This graph is constructed from the original graph by removing vertices of all forbidden targets, and edges between vertices of all forbidden target connections. Target and connection visits are enforced by adding additional constraints to MP. Presenting these constraints requires more notation. Let $\tilde{T}$ be the set of targets in the reduced graph, $ET \subseteq \tilde{T}$ be the set of targets with enforced visits, and $C = \{(t_1, t_2) : t_1 \in \tilde{T}, t_2 \in \tilde{T}\}$ be the set of enforced target connections. Let $b_{cr}$ be a binary parameter with value $1$ if route $r$ uses target connection $c = (t_1, t_2) \in C$, i.e. uses an edge between a vertex in $V^{t_1}$ and $V^{t_2}$, and $0$ otherwise. Solutions at each node are found using the following modified formulation of the MP:

\begin{flalign}
(\mathcal F_1) \qquad & \max \quad \sum_{r \in R} p_r z_r - M y & \label{eq:obj-y} \\ 
\text{subject to:} \qquad & \sum_{r \in R} z_r \leqslant m, & \label{eq:route-cover-y} \\ 
\qquad & \sum_{r \in R} a_{tr}  z_r \leqslant 1, \qquad \forall t \in \tilde{T}, & \label{eq:target-cover-y} \\ 
\qquad & \sum_{r \in R} a_{tr} z_r + y \geqslant 1, \qquad \forall t \in ET, &\label{eq:target-enforced-y} \\
\qquad & \sum_{r \in R} b_{cr} z_r + y \geqslant 1, \qquad \forall c \in C, &\label{eq:target-edge-enforced-y} \\
\qquad &  y \geqslant 0, z_r \in \{0, 1\} \qquad \forall r \in R. \label{eq:binary-y} 
\end{flalign}
Visits are enforced by the new constraints \eqref{eq:target-enforced-y} and \eqref{eq:target-edge-enforced-y}. The dual values of these additional constraints are accounted for in the reduced costs appropriately. As the branching scheme can cause $\mathcal{F}_1$ to be infeasible, we detect it with an artificial non-negative variable $y$ that carries a large negative coefficient $(-M)$ in the objective. If the solution to $\mathcal{F}_1$ has a non-zero value for $y$, the corresponding node can be pruned by infeasibility. The forthcoming theorem proves that the target branching and target connection branching defined above is sufficient for the B\&P algorithm to obtain an optimal solution to the D-DTOP i.e., it proves the exhaustivness of our branching scheme. 

\begin{theorem} \label{thm:main-theorem}
If a solution to the linear relaxation of the MP has integral flows into every target and fractional flows into one or more vertices, then there exists an integral solution to the MP with the same objective value. 
\end{theorem}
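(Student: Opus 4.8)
The plan is to push the problem down to the level of targets, where both the objective and all the combinatorial content live, and then use the integral flows to read off an integral selection of routes. First, note that the objective value of the given fractional solution $z^{*}$ depends only on which targets are visited: since $p_r=\sum_t a_{tr}p_t$, we have $\sum_r p_r z^{*}_r=\sum_t p_t x_t$ where $x_t:=\sum_r a_{tr}z^{*}_r\in\{0,1\}$ is the (integral) flow into target $t$. Writing $T^{+}:=\{t:x_t=1\}$, this value equals $P:=\sum_{t\in T^{+}}p_t$, and $m':=\sum_r z^{*}_r$ — the integral flow into $s$, bounded by $m$ — satisfies $m'\le m$. Also, every route with $z^{*}_r>0$ visits only targets of $T^{+}$, since a visit to $t\notin T^{+}$ would force $x_t>0$. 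Hence it suffices to produce at most $m$ pairwise target-disjoint routes of $R$ whose visited targets together make up exactly $T^{+}$: setting those route variables to $1$ then gives a feasible integral solution of the MP of value precisely $P$.

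Next I would study the target-contracted support. Let $H$ be the graph on the target nodes whose edges are the unordered pairs $\{t,u\}$ that carry positive target-connection flow under $z^{*}$. Every route uses exactly two edges at each target it visits other than $s$ and $d$, so summing the target-connection flows over the edges incident to a fixed $t\in T^{+}$ yields $2x_t=2$; since those flows are integral here, every $T^{+}$-node has degree exactly $2$ in $H$. This forces the structure of routes: if a route first visits $t_1$, then at $t_1$ it must use both of the two $H$-edges at $t_1$ and so continue to the other $H$-neighbour of $t_1$, and iterating this (the walk cannot revisit a node, nor can it halt at a degree-$2$ node) shows the continuation past $t_1$ is uniquely determined and reaches $d$. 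Thus a route's target sequence (its signature) is determined by its first target; the distinct non-empty ones $\sigma_1,\dots,\sigma_l$ are pairwise vertex-disjoint and partition $T^{+}$ (every $t\in T^{+}$ lies on some $\sigma_j$, and two $\sigma_j$ sharing a node coincide), and since the routes of signature $\sigma_j$ have combined weight $x_t=1$ for any $t$ on $\sigma_j$, we get $l\le\sum_r z^{*}_r=m'\le m$.

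To conclude, for each $j$ pick a route $r_j\in R$ with signature $\sigma_j$ (one exists, that class of routes having positive weight) and discard, without loss of generality, the empty-signature routes, which do not affect the objective. The $r_j$ are pairwise target-disjoint, there are at most $m'\le m$ of them, and together they visit exactly $T^{+}$, so $\sum_j p_{r_j}=\sum_{t\in T^{+}}p_t=P$. This is the desired integral solution; it also respects any enforced-visit constraints present at the node, since an enforced target lies in $T^{+}$ and an enforced connection, carrying flow $1$, is an edge of $H$ and hence lies on one of the $\sigma_j$.

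The step I expect to carry the weight is the middle one: turning ``the flows are integral'' into the rigidity statement that the whole support collapses onto a handful of pairwise vertex-disjoint $s$--$d$ target-paths. The degree count together with the forced propagation along a simple route is where this is earned, with some care needed for the special nodes $s,d$ and for routes with empty target signature. It is also worth being explicit that this argument uses integrality of the target-\emph{connection} flows, not merely of the target flows — precisely the situation reached once target branching and then target-connection branching have been exhausted, which is the case the exhaustiveness claim has to cover.
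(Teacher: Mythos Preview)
Your argument follows the same arc as the paper's: once the support decomposes into finitely many target sequences, group the fractional routes by their target sequence and pick one representative per class; since scores live at the target level and every column already satisfies the length budget, the resulting integral selection has the same objective. The paper states this in two sentences; you flesh it out through the degree-$2$ structure of the target-contracted support graph and the forced-propagation argument, which is a cleaner way to justify the key step that routes with positive weight share a common target sequence.

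Where you differ from the paper is worth noting. The paper asserts that integral flows into every target already imply integral flows on every target connection; you instead take integral target-connection flows as a hypothesis and flag this explicitly. You are right to be careful here: integral target flows alone do \emph{not} force integral connection flows (e.g.\ four half-weight routes $s$--$A$--$B$--$d$, $s$--$C$--$D$--$d$, $s$--$A$--$D$--$d$, $s$--$C$--$B$--$d$ give target flows $1$ everywhere but connection flows $\tfrac12$ on $A$--$B$, $A$--$D$, $C$--$B$, $C$--$D$). Your reading --- that the statement one actually needs for exhaustiveness is the one in force after \emph{both} target branching and target-connection branching have been exhausted --- is the correct interpretation, and your proof establishes exactly that. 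A small point to tighten in your write-up: work with the \emph{directed} target-connection flows (one in-edge, one out-edge at each $t\in T^{+}$) rather than the undirected degree, so the unique-continuation step is immediate and the special handling of $s$, $d$, and empty-signature routes stays clean.
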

\begin{proof}
First, it is not difficult to see that if a solution to the linear relaxation of the MP has integral flows into every target, then the flow between any pair of targets is integral and that any fractional path that visits a sequence of targets would have one or more fractional paths that visit the same targets in exactly the same sequence. All these paths have the same objective value since the profits associated with any vertex in a particular target is the same. Furthermore, the pricing algorithm in Sec. \ref{subsec:pricing} guarantees that every path (column) generated and added to the RMP satisfies the budget constraints. Hence, choosing one arbitrary path for each target visit sequence from the set of fractional paths that correspond to the same target visit sequence would result an integral solution to the MP with the same objective value.
\end{proof}

The above theorem indicates that if the solution to the linear relaxation of the MP has fractional flows into one or more vertices and integral flows into one or more targets, then it can be pruned by optimality. This makes the branching scheme exhaustive and precludes the need to branch on vertex or vertex connection visits.

\section{A note on concurrent implementation}  \label{sec:concurrency}
The branching scheme presented in the previous section along with the column generation procedure in Sec. \ref{subsec:cg} is used to implement a multi-threaded concurrent B\&P algorithm to solve the D-DTOP. In this context, this is achieved by processing nodes of the branch-and-bound tree concurrently. We implemented this concurrency using coroutines \cite{Moura2009} in a Communicating Sequential Processes (CSP) framework \cite{Hoare2002}. 

In this article, all the implementations of the branch-and-price algorithm are concurrent. Later in the computational results section, we present the gain in run-times when moving from single-threaded concurrent implementation to multi-threaded concurrent implementation for the proposed algorithm. We also remark that it is a common abuse of nomenclature to refer to multi-threaded concurrent implementations as parallel implementation due to the fact that in a multi-threaded implementation tasks are still being run simultaneously. But it is worthwhile to understand the subtle difference that in a multi-threaded concurrent implementation each thread can actually work with tasks from different functions whereas in a multi-threaded parallel implementation, the threads work only on the function that they are asked to process. The implementation of both the single-threaded and multi-threaded concurrent versions of the algorithm has been open-sourced and is available at \url{https://github.com/sujeevraja/fixed-wing-drone-orienteering}.

\section{Computational results} \label{sec:results}
In this section, we discuss the computational results of the branch-and-price algorithm. The algorithm was implemented in the Kotlin programming language with CPLEX 12.7 as the LP solver. All experiments were performed on an Intel Broadwell E5-2695 processor with a base clock rate of 2.10 GHz, a RAM of 128 GB and with 36 cores. All computation times reported are expressed in seconds. We imposed a time limit of $1$ hour for each run of the algorithm. Furthermore, to track visits to critical targets, we found that bit-based operations were computationally more efficient than using boolean arrays. For all runs, the values of $M$ in Eq. \eqref{eq:obj-y} was set to $10^5$.

\subsection{Instance generation} \label{subsec:instance-info}
The performance of the algorithm was tested using the standard benchmark library for the TOP (\url{https://www.mech.kuleuven.be/en/cib/op#section-3}) on instances with a maximum of $66$ targets. The total number of instances that satisfy this criteria is $267$. For each of these instances, three D-DTOP variants were generated with $(2,4,6)$ discretizations of heading angles. Possible heading angles for an instance at each target were obtained by uniformly discretizing the set $[0, 2\pi)$. For every vehicle, the turn radius was set to $1$ unit and the length of the path from vertex $p \in V^t$ to $q \in V^u$ for distinct targets $t,u \in T$ is computed as the shortest Dubins path from target $t$ to target $u$ with angle of departure and arrival set to $\beta_p$ and $\beta_q$, respectively. In summary, the number of instances of for the D-DTOP totals to $801$. 

\subsection{Performance of the branch-and-price algorithm} \label{subsec:bp-performance}
The first set of computational experiments were designed to evaluate the performance of the multi-threaded concurrent B\&P algorithm. The multi-threaded B\&P algorithm can process at most $8$ branch-and-bounds nodes simultaneously.  Table \ref{tab:instance-info} presents the number of instances that were optimal, and the number of them that timed-out respectively. 
\begin{table}
\centering
\caption{`opt' and `TO' in the table header represents the number of instances that were optimal and timed-out within a computational time limit of 1 hour respectively.} 
\label{tab:instance-info}
\csvreader[
    before reading=\scriptsize,
    after reading=\normalsize,
    tabular=|c|cc|cc|cc|,
    table head=\hline
    $|T|$ & \multicolumn{2}{c|}{$|\Theta|=2$} & \multicolumn{2}{c|}{$|\Theta|=4$} & \multicolumn{2}{c|}{$|\Theta|=6$} \\ 
    \hline
    & opt & TO & opt & TO & opt & TO \\,
    late after line=\\,
    late after last line=\\\hline,
    ]{tables/exhaustive_counts.csv}{1=\t,2=\otwo,3=\totwo,4=\ofour,5=\tofour,6=\osix,7=\tosix}%
    {\t & \otwo & \totwo & \ofour & \tofour & \osix & \tosix}
\end{table}
$80$ out of the total $801$ instances timed-out and the remaining instances were solved to optimality. It is also important to note that $15$ instances that were optimally solved had an objective value of $0$, i.e., no targets were visited by any of the vehicles in these instances. This illustrates the value of modeling kinematic constraints when formulating VRPs involving fixed-wing drones: a path that visits a subset of targets when not taking into account the kinematic constraints can potentially be infeasible when taking into account the kinematic constraints of the drone.

\paragraph{Table column names and description} We now present the exhaustive results of all runs that were optimal for at least one of the discretization values. Tables \ref{tab:21} -- \ref{tab:66} present these results for different numbers of targets. Nomenclature used in these tables is as follows: \textbf{n}: instance name, \textbf{rub}: LP relaxation value at the root node of the branch-and-bound tree, \textbf{opt}: optimal objective value, \textbf{nn}: number of nodes explored in the branch-and-bound tree, and \textbf{time}: computation time in seconds. 

In Tables \ref{tab:21} -- \ref{tab:66}, the instances for which the `\textbf{time}' column contains a value of $3600.00$ timed-out. For all such instances, the column `\textbf{opt}' is the objective value of the best feasible solution obtained. Also, whenever an instance timed-out, if it has a `--' in the `\textbf{rub}' column, it implies that the root relaxation failed to solve to optimality within the time limit. 

\paragraph{Effect of increasing heading angle discretization} A trend revealed from Tables \ref{tab:21} -- \ref{tab:66} is that as the number of heading angle discretizations is increased, the number of instances with an optimal objective value of $0$ decreases. This aligns with the intuition that increasing the heading angle discretizations at each target increases the search space thereby increasing the chance of finding a feasible solution that visits a subset of targets to the D-DTOP. Furthermore, the run-times also increase with increasing discretization. The reason for this behaviour is that with larger heading angle discretization, the cardinality of the vertex set $V^t$, associated with each target $t \in T$, increases with increasing $|\Theta|$. This in turn increases the size of the instance that is being solved. Finally, the optimal objective value of the D-DTOP is also observed to increase with increasing $|\Theta|$. This again is due to the fact that the solution space of the problem for greater $|\Theta|$ is larger. In fact, for a sequence of heading angle discretizations $\{\Theta_1, \Theta_2, \cdots, \Theta_k\}$ such that $\Theta_1 \subset \Theta_2 \subset \cdots \subset \Theta_k$, it is easy to see that the objective value for the corresponding D-DTOPs will increase monotonically and will bounded above by the objective to the TOP problem \cite{GuptaLowerBounds1,GuptaLowerBounds2}. Intuitively, this is a natural result because of the fact that the TOP is a relaxation of the D-DTOP. In all the results, this observation holds for when $|\Theta| \in \{2, 4\}$ and $|\Theta| \in \{2, 6\}$. The discretization angles for $|\Theta|=2$, $|\Theta|=4$, and $|\Theta|=6$ are $\{0, \pi\}$, $\{0, \pi/2, \pi, 3\pi/2\}$, and $\{0, \pi/3, 2\pi/3, \pi, 4\pi/3, 5\pi/3 \}$, respectively. Hence, the monotonicity of the objective is guaranteed when we go from $|\Theta| = 2$ to $|\Theta| = 4$ and from $|\Theta| = 2$ to $|\Theta| = 6$.

\setlength\tabcolsep{4pt}
 \csvreader[
    before reading=\scriptsize\sisetup{round-mode=places, round-precision=2},
    after reading=\normalsize,
    longtable=|c|rrrr|rrrr|rrrr|,
    table head = \caption{Branch-and-price algorithm results for $21$ target instances.\label{tab:21}}\\
    \hline \textbf{n} & \multicolumn{4}{c|}{$|\Theta|=2$} & \multicolumn{4}{c|}{$|\Theta|=4$} & \multicolumn{4}{c|}{$|\Theta|=6$} \\ 
    \hline
    & \textbf{rub} & \textbf{opt} & \textbf{nn} & \textbf{time} & \textbf{rub} & \textbf{opt} & \textbf{nn} & \textbf{time} & \textbf{rub} & \textbf{opt} & \textbf{nn} & \textbf{time} \\ \hline\endfirsthead
    \caption{Branch-and-price algorithm results for $21$ target instances (continued).}\\
    \hline \textbf{n} & \multicolumn{4}{c|}{$|\Theta|=2$} & \multicolumn{4}{c|}{$|\Theta|=4$} & \multicolumn{4}{c|}{$|\Theta|=6$} \\ 
    \hline
    & \textbf{rub} & \textbf{opt} & \textbf{nn} & \textbf{time} & \textbf{rub} & \textbf{opt} & \textbf{nn} & \textbf{time} & \textbf{rub} & \textbf{opt} & \textbf{nn} & \textbf{time} \\
    \hline\endhead 
    \hline\endfoot,
    late after line=\\,
    ]{tables/full_21.csv}{1=\n,2=\rubtwo,3=\opttwo,4=\nntwo,5=\ttwo,6=\rubfour,7=\optfour,8=\nnfour,9=\tfour,10=\rubsix,11=\optsix,12=\nnsix,13=\tsix}%
    {\n & \rubtwo & \opttwo & \nntwo & \ttwo & \rubfour & \optfour & \nnfour & \tfour & \rubsix & \optsix & \nnsix & \tsix}

\csvreader[
    before reading=\scriptsize\sisetup{round-mode=places, round-precision=2},
    after reading=\normalsize,
    longtable=|c|rrrr|rrrr|rrrr|,
    table head = \caption{Branch-and-price algorithm results for $32$ target instances.\label{tab:32}}\\
    \hline \textbf{n} & \multicolumn{4}{c|}{$|\Theta|=2$} & \multicolumn{4}{c|}{$|\Theta|=4$} & \multicolumn{4}{c|}{$|\Theta|=6$} \\ 
    \hline
    & \textbf{rub} & \textbf{opt} & \textbf{nn} & \textbf{time} & \textbf{rub} & \textbf{opt} & \textbf{nn} & \textbf{time} & \textbf{rub} & \textbf{opt} & \textbf{nn} & \textbf{time} \\ \hline\endfirsthead
    \caption{Branch-and-price algorithm results for $32$ target instances (continued).}\\
    \hline \textbf{n} & \multicolumn{4}{c|}{$|\Theta|=2$} & \multicolumn{4}{c|}{$|\Theta|=4$} & \multicolumn{4}{c|}{$|\Theta|=6$} \\ 
    \hline
    & \textbf{rub} & \textbf{opt} & \textbf{nn} & \textbf{time} & \textbf{rub} & \textbf{opt} & \textbf{nn} & \textbf{time} & \textbf{rub} & \textbf{opt} & \textbf{nn} & \textbf{time} \\
    \hline\endhead 
    \hline\endfoot,
    late after line=\\,
    ]{tables/full_32.csv}{1=\n,2=\rubtwo,3=\opttwo,4=\nntwo,5=\ttwo,6=\rubfour,7=\optfour,8=\nnfour,9=\tfour,10=\rubsix,11=\optsix,12=\nnsix,13=\tsix}%
    {\n & \rubtwo & \opttwo & \nntwo & \ttwo & \rubfour & \optfour & \nnfour & \tfour & \rubsix & \optsix & \nnsix & \tsix}

 \csvreader[
    before reading=\scriptsize\sisetup{round-mode=places, round-precision=2},
    after reading=\normalsize,
    longtable=|c|rrrr|rrrr|rrrr|,
    table head = \caption{Branch-and-price algorithm results for $33$ target instances.\label{tab:33}}\\
    \hline \textbf{n} & \multicolumn{4}{c|}{$|\Theta|=2$} & \multicolumn{4}{c|}{$|\Theta|=4$} & \multicolumn{4}{c|}{$|\Theta|=6$} \\ 
    \hline
    & \textbf{rub} & \textbf{opt} & \textbf{nn} & \textbf{time} & \textbf{rub} & \textbf{opt} & \textbf{nn} & \textbf{time} & \textbf{rub} & \textbf{opt} & \textbf{nn} & \textbf{time} \\ \hline\endfirsthead
    \caption{Branch-and-price algorithm results for $33$ target instances (continued).}\\
    \hline \textbf{n} & \multicolumn{4}{c|}{$|\Theta|=2$} & \multicolumn{4}{c|}{$|\Theta|=4$} & \multicolumn{4}{c|}{$|\Theta|=6$} \\ 
    \hline
    & \textbf{rub} & \textbf{opt} & \textbf{nn} & \textbf{time} & \textbf{rub} & \textbf{opt} & \textbf{nn} & \textbf{time} & \textbf{rub} & \textbf{opt} & \textbf{nn} & \textbf{time} \\
    \hline\endhead
    \hline\endfoot,
    late after line=\\,
    ]{tables/full_33.csv}{1=\n,2=\rubtwo,3=\opttwo,4=\nntwo,5=\ttwo,6=\rubfour,7=\optfour,8=\nnfour,9=\tfour,10=\rubsix,11=\optsix,12=\nnsix,13=\tsix}%
    {\n & \rubtwo & \opttwo & \nntwo & \ttwo & \rubfour & \optfour & \nnfour & \tfour & \rubsix & \optsix & \nnsix & \tsix}
    
 \csvreader[
    before reading=\scriptsize\sisetup{round-mode=places, round-precision=2},
    after reading=\normalsize,
    longtable=|c|rrrr|rrrr|rrrr|,
    table head = \caption{Branch-and-price algorithm results for $64$ target instances.\label{tab:64}}\\
    \hline \textbf{n} & \multicolumn{4}{c|}{$|\Theta|=2$} & \multicolumn{4}{c|}{$|\Theta|=4$} & \multicolumn{4}{c|}{$|\Theta|=6$} \\ 
    \hline
    & \textbf{rub} & \textbf{opt} & \textbf{nn} & \textbf{time} & \textbf{rub} & \textbf{opt} & \textbf{nn} & \textbf{time} & \textbf{rub} & \textbf{opt} & \textbf{nn} & \textbf{time} \\ \hline\endfirsthead
    \caption{Branch-and-price algorithm results for $64$ target instances (continued).}\\
    \hline \textbf{n} & \multicolumn{4}{c|}{$|\Theta|=2$} & \multicolumn{4}{c|}{$|\Theta|=4$} & \multicolumn{4}{c|}{$|\Theta|=6$} \\ 
    \hline
    & \textbf{rub} & \textbf{opt} & \textbf{nn} & \textbf{time} & \textbf{rub} & \textbf{opt} & \textbf{nn} & \textbf{time} & \textbf{rub} & \textbf{opt} & \textbf{nn} & \textbf{time} \\
    \hline\endhead
    \hline\endfoot,
    late after line=\\,
    ]{tables/full_64.csv}{1=\n,2=\rubtwo,3=\opttwo,4=\nntwo,5=\ttwo,6=\rubfour,7=\optfour,8=\nnfour,9=\tfour,10=\rubsix,11=\optsix,12=\nnsix,13=\tsix}%
    {\n & \rubtwo & \opttwo & \nntwo & \ttwo & \rubfour & \optfour & \nnfour & \tfour & \rubsix & \optsix & \nnsix & \tsix}
    
\csvreader[
    before reading=\scriptsize\sisetup{round-mode=places, round-precision=2},
    after reading=\normalsize,
    longtable=|c|rrrr|rrrr|rrrr|,
    table head = \caption{Branch-and-price algorithm results for $66$ target instances.\label{tab:66}}\\
    \hline \textbf{n} & \multicolumn{4}{c|}{$|\Theta|=2$} & \multicolumn{4}{c|}{$|\Theta|=4$} & \multicolumn{4}{c|}{$|\Theta|=6$} \\ 
    \hline
    & \textbf{rub} & \textbf{opt} & \textbf{nn} & \textbf{time} & \textbf{rub} & \textbf{opt} & \textbf{nn} & \textbf{time} & \textbf{rub} & \textbf{opt} & \textbf{nn} & \textbf{time} \\ \hline\endfirsthead
    \caption{Branch-and-price algorithm results for $64$ target instances (continued).}\\
    \hline \textbf{n} & \multicolumn{4}{c|}{$|\Theta|=2$} & \multicolumn{4}{c|}{$|\Theta|=4$} & \multicolumn{4}{c|}{$|\Theta|=6$} \\ 
    \hline
    & \textbf{rub} & \textbf{opt} & \textbf{nn} & \textbf{time} & \textbf{rub} & \textbf{opt} & \textbf{nn} & \textbf{time} & \textbf{rub} & \textbf{opt} & \textbf{nn} & \textbf{time} \\
    \hline\endhead
    \hline\endfoot,
    late after line=\\,
    ]{tables/full_66.csv}{1=\n,2=\rubtwo,3=\opttwo,4=\nntwo,5=\ttwo,6=\rubfour,7=\optfour,8=\nnfour,9=\tfour,10=\rubsix,11=\optsix,12=\nnsix,13=\tsix}%
    {\n & \rubtwo & \opttwo & \nntwo & \ttwo & \rubfour & \optfour & \nnfour & \tfour & \rubsix & \optsix & \nnsix & \tsix}
    
\subsection{Value of multi-threading} \label{subsec:multi-threading}
To show the computational impact of using multiple threads in the concurrent B\&P algorithm of the D-DTOP, we selected all instances that were solved to optimality within the computation time limit of 1 hour and with the number of nodes explored in the branch-and-bound tree is greater than $1$ in Tables \ref{tab:21}--\ref{tab:66}. The total number of such instances was $118$. The single-threaded concurrent implementation is equivalent to the sequential implementation and throughout the rest of the article, we shall refer to this implementation as the sequential B\&P algorithm. As for the multi-threaded implementation, we shall refer to it as the concurrent B\&P algorithm. The sequential B\&P algorithm was run on all of these $118$ instances and Table \ref{tab:concurrency} shows the results for all those instances that consumed more than $5$ seconds of computation time for the sequential B\&P approach; the number of instances that satisfy this criteria is $73$. In Table \ref{tab:concurrency}, the nomenclature is as follows: \textbf{n}: instance name, \textbf{seq-time}: computation time of the sequential B\&P algorithm in seconds, \textbf{concurrent-time}: computation time of the concurrent B\&P algorithm in seconds, \textbf{max-solvers}: the maximum number of nodes in the branch-and-bound tree that are processed simultaneously by the concurrent B\&P algorithm, \textbf{improvement}: the gain in computation time in percent provided by the concurrent B\&P approach over the sequential B\&P approach.  It is clear from Table \ref{tab:concurrency} that the multi-threaded version of the B\&P i.e., the concurrent version of the B\&P algorithm with multiple threads is a clear winner and should always be preferred to the sequential B\&P algorithm.

\csvreader[
    before reading=\scriptsize\sisetup{round-mode=places, round-precision=2},
    after reading=\normalsize,
    longtable=|c|c|r|r|r|r|,
    table head = \caption{Value of concurrency\label{tab:concurrency}}\\
    \hline \textbf{n} & $|\Theta|$ & \textbf{seq-time} & \textbf{concurrent-time} & \textbf{max-solvers}
    & \textbf{improvement} \\ \hline\endfirsthead
    \caption{Value of concurrency (continued).}\\
    \hline \textbf{n} & $|\Theta|$ & \textbf{seq-time} & \textbf{concurrent-time} & \textbf{max-solvers}
    & \textbf{improvement} \\
    \hline\endhead
    \hline\endfoot,
    late after line=\\,
    ]{tables/one_thread.csv}{1=\n,2=\numtargets,3=\disc,4=\tseq,5=\tcon,6=\concsolves,7=\improvement,}%
    {\n & \disc & \tseq & \tcon & \concsolves & \improvement}

For each of these $73$ instances, the scatter plot in Fig. \ref{fig:concurrent_scatter} provides a visual of how much of a gain in computation time is provided by the concurrent B\&P algorithm. In the Fig. \ref{fig:concurrent_scatter}, the lower the point below the line, the greater is the gain in the computation time provided by the multi-threaded B\&P over the sequential (single-threaded) B\&P. 

\begin{figure}
    \centering
    \includegraphics[scale=0.7]{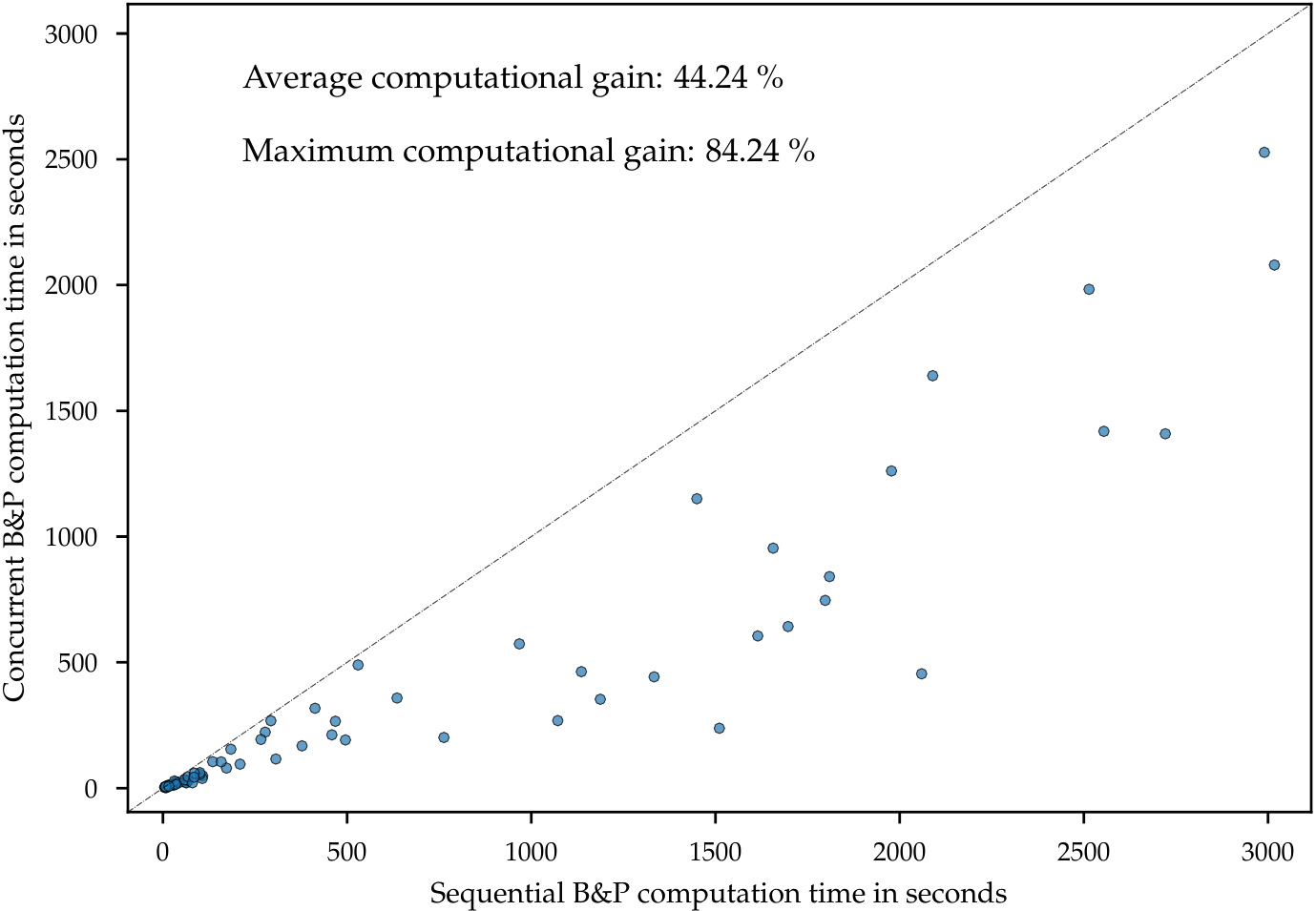}
    \caption{Scatter plot of the computation time to obtain the optimal solution when utilizing the multi-threaded implementation of the concurrent B\&P algorithm vs. the single-threaded implementation of the concurrent B\&P algorithm. The average and maximum gain in computation time is observed to be $45$\% and $85$\%, respectively. If a point lies on the line, then the computation time of the sequential B\&P algorithm is equal to that of the concurrent algorithm.}
    \label{fig:concurrent_scatter}
\end{figure}


\section{Conclusion and future work} \label{sec:conclusions}
This paper formulates a team orienteering problem for fixed-wing drones and presents a comprehensive B\&P algorithm to solve the problem to optimality. Finally, to the best of our knowledge, we also present the first ever concurrent implementation of the B\&P algorithm and show the computational gain over its sequential counterpart. The implementation for all experiments performed as a part of this research has been open-sourced and made available for use by the research community. Future work would focus on extending approaches presented in this paper to a wider class of vehicle routing problems with fixed-wing drones.

\section*{Acknowledgements}
Kaarthik Sundar acknowledges the funding provided by  LANL’s  Directed  Research  and  Development  (LDRD)  project: ``20200603ECR: Distributed Algorithms for Large-Scale Ordinary Differential/Partial Differential Equation (ODE/PDE) Constrained Optimization Problems on Graphs''. This work was carried out under the U.S. DOE Contract No. DE-AC52-06NA25396.

\bibliography{references}

\end{document}